\documentclass[12pt,reqno]{amsart}
\usepackage{etex}
\usepackage{amssymb,tikz-cd}
\usepackage{mathrsfs}
\usepackage{palatino}
\usepackage{bbm}
\usepackage{pdflscape}
\usepackage{mathrsfs}
\usepackage{cases}
\usepackage{epic}
\usepackage{amsfonts}
\usepackage{graphicx}
\usepackage{amsmath}
\usepackage{amssymb, upgreek}
\usepackage{bm}
\usepackage{latexsym,todonotes}
\usepackage{pdflscape}
\usepackage[all]{xypic}
\usepackage{color}
\usepackage{colordvi}
\usepackage{multicol}
\usepackage[normalem]{ulem}
\usepackage[linktocpage=true]{hyperref}
\usepackage{hyperref}
\hypersetup{colorlinks,linkcolor=blue,urlcolor=cyan,citecolor=red}
\usepackage{pictex}
\usepackage{amscd}
\usepackage{latexsym}
\usepackage{amssymb}
\usepackage{eucal}
\usepackage{eufrak}
\usepackage{verbatim}

\newcommand{\NN}{\mathbb{N}}

\newcommand{\ZZ}{\mathbb{Z}}

\newcommand{\cF}{\mathcal{F}}

\newcommand{\fg}{\mathfrak{g}}
\newcommand{\fgl}{\mathfrak{gl}}

\newcommand{\kk}{\mathbbm{k}}

\newcommand{\fr}{\mathfrak{r}}
\newcommand{\gl}{\mathfrak{gl}}

\newcommand{\fz}{\mathfrak{z}}

\DeclareMathOperator{\super}{super}
\DeclareMathOperator{\Char}{char}

\DeclareMathOperator{\odd}{odd}

\DeclareMathOperator{\gr}{gr}

\DeclareMathOperator{\Mat}{Mat}

\DeclareMathOperator{\HC}{HC}

\numberwithin{equation}{section}
\newtheorem{Theorem}{Theorem}[section]
\newtheorem{Lemma}[Theorem]{Lemma}
\newtheorem{Corollary}[Theorem]{Corollary}
\newtheorem{Proposition}[Theorem]{Proposition}
\newtheorem{Remark}[Theorem]{Remark}

\theoremstyle{Theorem}

\newtheorem*{thm*}{Theorem}
\newtheorem*{thm**}{Corollary}
\newtheorem*{thm***}{Theorem B}
\theoremstyle{remark}

\newtheorem*{Definition}{Definition}

\numberwithin{equation}{section}

\begin{document}
\title[even char]{Super Yangians in characteristic $2$}
\author[Hao Chang \lowercase{and} Hongmei Hu]{Hao Chang \lowercase{and} Hongmei Hu*}
\address[Hao Chang]{School of Mathematics and Statistics, Central China Normal University, Wuhan 430079, China}
\email{chang@ccnu.edu.cn}
\address[Hongmei Hu]{Department of Mathematics, Shanghai Maritime University, Shanghai 201306, China}
\email{hmhu@shmtu.edu.cn}
\date{\today}
\thanks{*~Corresponding author.}
\subjclass[2020]{Primary 17B37, Secondary 17B50}

\begin{abstract}
We define the super Yangian $Y_{m|n}$ over a field $\kk$ of characteristic $2$,
and show that the super Yangian $Y_{m|n}$ is a 
deformation of the super universal enveloping algebra of the current Lie algebra $\fgl_{m+n}[t]$.
By employing the methods of the work of \cite{BT18}, 
we also give a description of the center of $Y_{m|n}$.
\end{abstract}
\maketitle
\section{Introduction}
The Yangian $Y_n$ associated to the Lie
algebra $\fgl_n$ over complex field was introduced in \cite{TF79}.
It is an associative algebra generated by the {\it RTT generators} 
$\{t_{i,j}^{(r)};~1\leq i,j\leq n,r>0\}$ subject to the relations:
\begin{align}\label{RTT relations}
\left[t_{i,j}^{(r)}, t_{k,l}^{(s)}\right] =\sum_{t=0}^{\min(r,s)-1}
\left(t_{k, j}^{(t)} t_{i,l}^{(r+s-1-t)}-
t_{k,j}^{(r+s-1-t)}t_{i,l}^{(t)}\right)
\end{align}
for $1\leq i,j,k,l\leq n$ and $r,s>0$.
In \cite{BT18},
Brundan and Topley developed the theory of the Yangian $Y_n$ over a field of positive
characteristic.
They defined the Yangian $Y_n$ over any field $\kk$ by the same {\it RTT presentation} \eqref{RTT relations} as above.
When $\kk$ is of characteristic zero,
it is well-known that $Y_n$ is a filtered deformation of the universal enveloping algebra 
$U(\fgl_n[t])$ of the {\it current algebra} $\fgl_n[t]:=\fgl_N\otimes \kk[t]$, cf. \cite{MNO96}.
The same assertion is true when $\kk$ is of positive characteristic (\cite[Lemma 4.2]{BT18}).
Moreover,
they extended the {\it Drinfeld-type presentation} (\cite{D88, BK05}) from characteristic zero to positive characteristic and gave a description of the center $Z(Y_n)$ of $Y_n$.
One of the key features which differs from characteristic zero is the existence of a large central subalgebra $Z_p(Y_n)$,
called the {\it $p$-center}.

The super Yangian $Y_{m|n}$ associated to the general linear Lie superalgebra $\fgl_{m|n}$ over the complex field was defined by Nazarov \cite{Na91} in terms of the RTT presentation as a super analogue of $Y_n$. 
In \cite{CH23},
we obtained the superalgebra generalization of the $p$-center of \cite{BT18} for the modular super Yangian of type $A$.
Following \cite{Na91}, 
we define the super Yangian over a field $\kk$ to be the associative superalgebra with the RTT generators $\{t_{i,j}^{(r)};~1\leq i,j\leq m+n, r>0\}$
subject to the following relations:
\begin{align}\label{RTT relations for Ym|n}
\left[t_{i,j}^{(r)}, t_{k,l}^{(s)}\right]=(-1)^{\bar{i}\bar{j}+\bar i\bar k+\bar{j}\bar{k}}\sum_{t=0}^{\min(r,s)-1}
\left(t_{k, j}^{(t)} t_{i,l}^{(r+s-1-t)}-
t_{k,j}^{(r+s-1-t)}t_{i,l}^{(t)}\right),
\end{align}
where $\bar i=0$ if $1\leq m$, 
$\bar i=1$ if $i\geq m+1$,
and the bracket is understood as a supercommutator.
Over a field $\kk$ of characteristic $\neq 2$,
the $Y_{m|n}$ is also a deformation of the universal enveloping algebra of the polynomial current superalgebra $\fgl_{m|n}[t]$ (cf. \cite[Corollary 2]{Gow07}, \cite[Lemma 3.2]{CH23}).

The definition of {\it Lie superalgebra} is the same as usual for any characteristic $\neq 2$,
that is, a Lie superalgebra is a $\ZZ_2$-graded algebra $L=L_{\bar 0}\oplus L_{\bar 1}$ with bilinear multiplication $[\cdot,\cdot]$ satisfying the usual skew-supersymmetry and super Jacobi identity (see for instance \cite{Kac77}).
The definition does not naively extend to characteristic $2$.
Otherwise, the Lie superalgebra is just a $\ZZ_2$-graded Lie algebra.
To address this issue, 
a Lie superalgebra over a field of characteristic $2$ is defined as a $\ZZ_2$-graded Lie algebra $L=L_{\bar 0}\oplus L_{\bar 1}$ endowed with a quadratic map $Q:L_{\bar 1}\rightarrow L_{\bar 0}$ (see \cite{Bou+20, Bou+23}).
One can define the {\it super enveloping algebra} $U_{\super}(L)$ as a replacement for the usual eveloping algebra of a Lie superalgerba.
Then we have the PBW isomorphism $\gr U_{\super}(L)\cong S(L_{\bar 0})\otimes \Lambda(L_{\bar 1})$,
which is in a good agreement with the theory of Lie superalgebras in characteristic $\neq 2$ (see \cite{EH25}).

Of course, 
the above issue will also happen for the modular super Yangian $Y_{m|n}$.
Suppose that $\Char\kk=2$.
Observing \eqref{RTT relations for Ym|n},
we see that the super phenomenon disappears and the super Yangian $Y_{m|n}$ coincides with the Yangian $Y_{m+n}$ (see \cite{CH23}, \cite{Hu25}).
In this short note, we define the super Yangian $Y_{m|n}$ in characteristic $2$.
One can show that it is the deformation of the super enveloping algebra
$U_{\super}(\fgl_{m+n}[t])$.
In Section \ref{Section:Lie super}, 
we recall some notations about current algebras
and Lie superalgebras in characteristic $2$.
Then we determine the center of the super universal enveloping algebra of the current Lie superalgebra.
In Section \ref{section:super Yangian},
we first define the super Yangian $Y_{m|n}$ in characteristic $2$ and prove the PBW theorem.
Moreover, we give a description of the center of $Y_{m|n}$.
\section{Lie superalgebras}\label{Section:Lie super}
From now on,
we always assume that $\kk$ is an algebraically closed field of characteristic $\Char(\kk)=:p=2$.
\subsection{The current Lie algebra}
Given two positive integers $m,n>0$,
we consider the general linear Lie algebra $\gl_{m+n}$.
The {\it current algebra} is defined to be the Lie algebra $\gl_{m+n}[t]:=\gl_{m+n}\otimes \kk[t]$.
We will always denote this Lie algebra by $\fg$ and write $U(\fg)$ for its enveloping algebra.
Then the elements $e_{i,j}t^r:=e_{i,j}\otimes t^r$ with $r\geq 0$ and $i,j=1,\dots,m+n$
make a basis of $\fg$ and the Lie bracket is given by
\begin{align}\label{lie bracket of g}
[e_{i,j}t^r,e_{k,l}t^s]=\delta_{k,j}e_{i,l}t^{r+s}-\delta_{l,i}e_{k,j}t^{r+s},
\end{align}
where $e_{i,j}$ is the standard elementary matrix and
$1\leq i,j,k,l\leq m+n$,
$r,s\geq 0$.
Now let the indices $i,j$ run through $1,\dots,m+n$.
Set $\bar i=0$ if $1\leq i\leq m$ and $\bar i=1$ if $m<i\leq m+n$. 
Accordingly, equip the algebra $\fg$ with the $\ZZ_2$-gradation,
the {\it parity} of the element $e_{i,j}t^r$ is found by $\bar i+\bar j\mod 2$.
So that the basis element $e_{i,j}t^r$ is {\it even} if $1\leq i,j\leq m$ or $m+1\leq i,j\leq m+n$ and $e_{i,j}t^r$ is {\it odd} otherwise.

Recall that a {\it restricted Lie algebra} over $\kk$ is a Lie algebra $\mathfrak{r}$
over $\kk$ with a map $[p]:\fr\rightarrow \fr$ sending $x\mapsto x^{[2]}$ such that the map
\[
\xi:\fr\rightarrow U(\fr);~\quad x\mapsto \xi(x):=x^p-x^{[p]}
\]
satisfies two properties:
(i) $\xi(\fr)$ lie in the center of $U(\fr)$;
(ii) $\xi$ is $p$-semilinear.
Here the $p$-semilinear means that $\xi(\lambda x)=\lambda^p\xi(x)$ and
$\xi(x+y)=\xi(x)+\xi(y)$, for all $x,y\in\fr, \lambda\in\kk$.
We refer the reader to \cite[Section 2]{Jan98} for more details on restricted Lie algebras.

We identify $\fg$ with the matrix algebra $\Mat_{m+n}(\kk[t])$.
Then the current Lie algebra $\fg$ carries a natural restricted Lie algebra structure
with the $[p]$-map defined by the rule $a^{[p]}=a^{[2]}=a\cdot a$ for $a\in \fg$.
Here $a\cdot a$ denotes the usual matrix multiplication.
In particular, the $[p]$-map defined on the basis by $(e_{i,j}t^r)^{[p]}=\delta_{i,j}e_{i,j}t^{pr}$.

We will let $Z(\fg)$ be the center of $U(\fg)$.
Using the restricted structure,
we can define the $p$-center $Z_p(\fg)$ of $U(\fg)$ to be the subalgebra of
$Z(\fg)$ generated by $x^p-x^{[p]}$ for all $x\in\fg$. 
Since the map $\xi:x\mapsto x^p-x^{[p]}$ is $p$-semilinear,
we have that
\begin{align}\label{zpg}
 Z_p(\fg):=\kk[(e_{i,j}t^r)^p-\delta_{i,j}e_{i,j}t^{rp};~1\leq i,j\leq m+n, r\geq 0]   
\end{align}
as a free polynomial algebra.
\subsection{Lie superalgebras in characteristic $2$}
We first recall the following definition.
\begin{Definition}(\cite[Subsection 1.2.3]{Bou+20}, \cite[Subsection 2.2]{Bou+23},
see also \cite[Definition 3.1]{EH25})
A Lie superalgebra over $\kk$ is a $\ZZ_2$-graded Lie algebra $L=L_{\bar 0}\oplus L_{\bar 1}$
together with a quadratic map $Q:L_{\bar 1}\rightarrow L_{\bar 0}$
such that for $y,y_1,y_2\in L_{\bar 1}$, $x\in L$ we have
\begin{align}\label{quadratic q}
[y_1,y_2]=Q(y_1+y_2)-Q(y_1)-Q(y_2),\quad [Q(y),x]=[y,[y,x]].  
\end{align}
The {\it super universal enveloping algebra} of $L$ is defined via 
\begin{align}\label{def:super enveloping algebra}
U_{\super}(L):=U(L)/(y^2-Q(y);~y\in L_{\bar 1}).  
\end{align}
\end{Definition}
As before, we identify the elements in the current Lie algebra $\fg$ with polynomial matrices.
We therefore obtain
\[
 (x+y)^{[p]}=(x+y)^{[2]}=(x+y)\cdot (x+y)=x^{[2]}+y^{[2]}+x\cdot y+y\cdot x=x^{[2]}+y^{[2]}+[x,y]
\]
for all $x,y\in\fg$.
The last equality follows from the definition of Lie bracket of $\fg$ \eqref{lie bracket of g}.
On the other hand, since $\fg$ is restricted,
it follows that $[x,[x,y]]=[x^{[2]},y]$.
Consequently, the restriction $[p]_{\fg_{\bar 1}}$ of the $[p]$-map to $\fg_{\bar 1}$
defines a quadratic map satisfying \eqref{quadratic q}
and the Lie algebra $\fg$ is endowed with a Lie superalgebra structure.
By definition \eqref{def:super enveloping algebra}, we have
\[
U_{\super}(\fg)=U(\fg)/(y^2-y^{[2]};~y\in\fg_{\bar 1}).
\]
Let $Z_{p,\odd}(\fg)$ be the subalgebra of $Z_p(\fg)$ generated by $\xi(y)=y^2-y^{[2]}$ for all $y\in\fg_{\bar 1}$.
Again,
the semilinearity of $\xi$ implies that
\[
 Z_{p,\odd}(\fg)=\kk[(e_{i,j}t^r)^2;~\bar i+\bar j=1\!\!\!\!\! \mod 2, r\geq 0]      
\]
is a free polynomial algebra. 

For homogeneous elements $x_1,\dots,x_s$ in a superalgebra $A$,
a {\em supermonomial} in $x_1,\dots,x_s$ means
a monomial of the form $x_1^{i_1}\dots x_s^{i_s}$ for some $ i_1,\dots,i_s\in\ZZ_{>0}$ and $i_j\leq 1$ if $x_j$ is odd.
The following proposition is
therefore a consequence of the PBW theorem (cf. \cite[Propositions 2.3 and 2.8]{Jan98}).
\begin{Proposition}\label{prop:PBW for u superg}
The enveloping algebra $U(\fg)$ is free as a module over $Z_{p,\odd}(\fg)$ with basis given by the ordered supermonomials in the following elements of $\fg$
   \[
\{e_{i,j}t^r;~1\leq i,j\leq m+n, r\geq 0\}.
   \]
In particular, the above ordered supermonomials forms a linear basis for $U_{\super}(\fg)$.
\end{Proposition}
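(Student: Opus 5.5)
The plan is to combine the ordinary PBW theorem for $U(\fg)$ with the fact that the $p$-center is generated by central elements that behave, with respect to the standard filtration, like $p$-th powers of basis elements. Fix a total order on the basis $\{e_{i,j}t^r\}$ of $\fg$. For a monomial exponent $a_x\geq 0$ at an odd basis element $x$, write $a_x=2q_x+\epsilon_x$ with $\epsilon_x\in\{0,1\}$. Since $(e_{i,j}t^r)^{[2]}=0$ for $i\neq j$, and every odd basis element has $i\neq j$, we get $\xi(x)=x^2$ for odd basis elements $x$. Consider the elements
\[
\prod_{x\ \mathrm{odd}}\xi(x)^{q_x}\cdot\Big(\text{ordered supermonomial }\textstyle\prod_x x^{b_x}\Big),
\]
where $b_x=a_x$ for even $x$ and $b_x=\epsilon_x\le 1$ for odd $x$. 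The claim is that these form a basis of $U(\fg)$ over $\kk$. That is exactly the statement that $U(\fg)$ is free over the polynomial algebra $Z_{p,\odd}(\fg)=\kk[\xi(x);x \text{ odd}]$ with the ordered supermonomials as a basis. Here we use that $Z_{p,\odd}(\fg)$ is a free polynomial algebra in the $\xi(x)$, as recorded above.

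The key step is a filtration argument following \cite[Propositions 2.3 and 2.8]{Jan98}. Use the standard PBW filtration on $U(\fg)$, so that $\gr U(\fg)\cong S(\fg)$. Since $\xi(x)$ is central, the element displayed above equals, modulo lower filtration degree, the ordered PBW monomial $\prod_x x^{a_x}$ of total degree $\sum a_x$. The correspondence $(q,b)\leftrightarrow a$ is a bijection between index data and PBW exponents. Hence the leading symbols of our elements in $S(\fg)$ are exactly the monomial basis of $S(\fg)$. A standard triangularity argument, by induction on filtration degree, then shows that the family is linearly independent and spans each filtered piece. This gives freeness over $Z_{p,\odd}(\fg)$ with the stated basis. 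The only point to check carefully is that finitely many basis elements occur in any given filtered piece restricted to a finite subset; this holds because everything happens inside $U$ of finite-dimensional subspaces, and one can argue degree by degree.

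For the second assertion, note that the ideal $I=(y^2-Q(y);\,y\in\fg_{\bar 1})$ is generated by the central elements $\xi(y)$. By semilinearity of $\xi$ on $\fg_{\bar 1}$, these are spanned over $\kk$ by the $\xi(x)$ for odd basis $x$, using $\lambda^2$-scaling and that $\kk$ is perfect. Hence $I=Z_{p,\odd}(\fg)_+\,U(\fg)$, where $Z_{p,\odd}(\fg)_+$ is the augmentation ideal of the polynomial algebra. Freeness then gives
\[
U_{\super}(\fg)\cong U(\fg)\otimes_{Z_{p,\odd}(\fg)}\kk,
\]
which has the images of the ordered supermonomials as a basis.

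The main obstacle is the semilinearity step: verifying that $\xi(y)$ for arbitrary odd $y$ (not just basis elements) lies in the span of the $\xi(x)$. This requires $\xi(y_1+y_2)=\xi(y_1)+\xi(y_2)$. It follows from restrictedness, since $(y_1+y_2)^2-y_1^2-y_2^2=[y_1,y_2]=(y_1+y_2)^{[2]}-y_1^{[2]}-y_2^{[2]}$, and this identity has already been noted in the text.
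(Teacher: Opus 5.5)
Your proposal is correct and is essentially the paper's argument: the paper simply invokes the PBW theorem together with Jantzen's Propositions 2.3 and 2.8, whose proof is exactly this comparison of leading symbols $\prod_x x^{a_x}$ in $\gr U(\fg)\cong S(\fg)$ (here applied only to the odd basis vectors, where $\xi(x)=x^2$), followed by reduction modulo the central ideal $U(\fg)Z_{p,\odd}(\fg)_+$. Two of your caveats are unnecessary: the triangularity argument works in each filtered piece whatever its dimension, and $\xi(\lambda x)=\lambda^2\xi(x)$ already places every $\xi(y)$ in the span of the $\xi(x)$, so perfectness of $\kk$ is not needed.
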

\subsection{The center of $U_{\super}(\fg)$}
The adjoint action of $\fg$ on itself extends uniquely to an action of $\fg$ on $U(\fg)$.
Since the ideal $(y^2-y^{[2]};~y\in \fg_1)$ is invariant under the action of $\fg$, the action of $\fg$ on $U(\fg)$ naturally extends to an action on $U_{\super}(\fg)$. The invariant subalgebra is denoted $U_{\super}(\fg)^{\fg}$. 
We write $Z_{\super}(\fg)$ for the center of $U_{\super}(\fg)$. 
In particular, we have $Z_{\super}(\fg)=U_{\super}(\fg)^{\fg}$.
There is one obvious family of central elements in $U_{\super}(\fg)$.
For any $r\in\NN$, we set 
\begin{align}\label{zr}
z_r:=e_{1,1}t^r+\cdots +e_{m+n,m+n}t^r.  
\end{align}
Using \eqref{lie bracket of g} one can show by direct computation that the set $\{z_r;~r\geq 0\}$ forms a basis for the center $\fz(\fg)$ of $\fg$.
Now Proposition \ref{prop:PBW for u superg} implies $\kk[z_r;~r\geq 0]$ is a subalgebra of $Z_{\super}(\fg)$.

There is a filtration 
\[
 U(\fg)=\bigcup\limits_{r\geq 0}{\rm F}_r U(\fg)
\]
of the enveloping algebra $U(\fg)$, 
which is defined by placing $e_{i,j}t^r$ in degree $r+1$.
The filtration descends to a filtration on $U_{\super}(\fg)$.
According to the PBW theorem (Proposition \ref{prop:PBW for u superg}) (see also \cite[Theorem 3.2]{EH25}), 
the associated graded algebra $\gr U_{\super}(\fg)$ is isomorphic to $S(\fg_{\bar 0})\otimes \Lambda (\fg_{\bar 1})$.
Here $\Lambda(\fg_{\bar 1})$ is the quotient of $S(\fg_{\bar 1})$ by the relations $x^2=0$, $x\in\fg_{\bar 1}$ (see the introduction of \cite{EH25}).
Let $S_{\super}(\fg):=S(\fg_{\bar 0})\otimes \Lambda (\fg_{\bar 1})$ for short. It follows that 
\begin{align}\label{grzsuper shuyu Ssuper gg}
\gr Z_{\super}(\fg)=\gr U_{\super}(\fg)^{\fg}\subseteq (\gr U_{\super}(\fg))^\fg=S_{\super}(\fg)^{\fg}.
\end{align}
\begin{Lemma}\label{Lem: invariant subalgebra S super gg}
The invariant algebra $S_{\super}(\fg)^\fg$ is generated by $\{z_r;~r\geq 0\}$ together with $(\fg_{\bar 0})^2:=\{x^2;~x\in\fg_{\bar 0}\}\subseteq S_{\super}(\fg)$.
In fact,
$S_{\super}(\fg)^\fg$ is freely generated by
\begin{align}\label{free generators of ssuper gg}
\{z_r;~r\geq 0\}\cup \{(e_{i,j}t^r)^2;~1\leq i, j\leq m+n~{\rm with}~(i,j)\neq (1,1), \bar i+\bar j=0\!\!\!\!\! \mod 2, r\geq 0\}.
\end{align}
\end{Lemma}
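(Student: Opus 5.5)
Since $\Char\kk=2$, the supercommutative algebra $S_{\super}(\fg)=S(\fg_{\bar 0})\otimes\Lambda(\fg_{\bar 1})$ is an honest commutative algebra. It is the quotient of $S(\fg)$ by the ideal generated by the squares $y^2$ with $y\in\fg_{\bar 1}$; these squares are additive, since $(y_1+y_2)^2=y_1^2+y_2^2$. The action of $x\in\fg$ on $S_{\super}(\fg)$ is a derivation. Hence $x\cdot a^2=2a(x\cdot a)=0$ for every $a$, so every square in $S_{\super}(\fg)$ is invariant. In particular all $(e_{i,j}t^r)^2$ with $\bar i+\bar j=0$ are invariant, and so are the $z_r$, because they are central in $\fg$. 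The plan is to show that these elements generate the invariants, and then to establish freeness by a monomial-counting argument.

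\textbf{Reduction to a ``truncated'' problem.} Let $R\subseteq S_{\super}(\fg)$ be the subalgebra generated by the squares of the even basis vectors. It is a polynomial algebra, and $S_{\super}(\fg)$ is a free $R$-module with basis the monomials $\prod (e_{i,j}t^r)^{a_{ijr}}$ with all $a_{ijr}\in\{0,1\}$. This is simply the exterior algebra $\Lambda(\fg)$ in characteristic $2$, that is, $\bar S(\fg):=S(\fg)/(x^2)$. Since $R$ consists of invariants and $\fg$ acts by $R$-linear derivations, we get $S_{\super}(\fg)^\fg=R\otimes_\kk \bar S(\fg)^\fg$, where $\fg$ acts on $\bar S(\fg)=\Lambda(\fg)$ via the adjoint action. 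The claim therefore reduces to a computation on $\Lambda(\fg)$. There we must show that $\Lambda(\fg)^\fg$ is spanned, over $R$ after accounting for $z_r^2\in R$, by the monomials $z_{r_1}\cdots z_{r_k}$ with distinct $r_i$. Since $z_r^2=\sum_i (e_{i,i}t^r)^2$, we can eliminate $(e_{1,1}t^r)^2$ in favour of $z_r^2$. This gives the free generating set \eqref{free generators of ssuper gg}, with $(1,1)$ excluded: the generators $z_r$ together with the remaining squares form a polynomial algebra, as one checks via the triangular change of variables $e_{1,1}t^r\leftrightarrow z_r$.

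\textbf{Computing $\Lambda(\fg)^\fg$.} Change coordinates, replacing $e_{1,1}t^r$ by $z_r$. Then $\fg=\fz(\fg)\oplus V$, where $V$ is spanned by the other basis vectors, and $\Lambda(\fg)=\Lambda(\fz)\otimes\Lambda(V)$ with $\fz$ acting trivially. It suffices to show that $\Lambda(V)^\fg=\kk$; more precisely, we need that an element $\sum_I z_I\otimes v_I$ is invariant only if each $v_I$ is invariant in the $\fg$-module $\Lambda(V)$ (via the identification $\Lambda(\fg)/(\fz)$-type projections), and that invariants of positive degree vanish. For the positive-degree part, use weights. First, the torus $e_{i,i}$ acts semisimply, so an invariant has weight zero. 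Second, the elements $e_{i,j}t^0$ with $i\ne j$ act as derivations, and on a wedge monomial the action of $e_{i,j}$ sends $e_{j,l}t^s\mapsto e_{i,l}t^s$, and similarly on the other index. I would then pick a suitable term of a putative invariant, leading in some order (say lexicographic in $t$-degree and then in the indices). I would apply an $e_{i,j}t^s$ with $s$ large, or an off-diagonal $e_{i,j}$, that produces a leading term which cannot cancel, since the squares are zero and no other term can contribute. The extra $t$-degree shifts from $e_{i,j}t^s$ should make the leading term unique, as in the characteristic-free argument for $S(\fg)^\fg$ in \cite{BT18}.

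\textbf{Main obstacle.} The hardest step is this last one, the vanishing of positive-degree invariants in $\Lambda(V)$ over characteristic $2$. Ordinary arguments via the Harish-Chandra restriction or via the invariant theory of $GL$ fail here, and the Frobenius twist supplies extra invariants, though exactly the squares we have already split off. I would first try a direct leading-term argument using the action of $e_{i,j}t^s$ for $s$ exceeding every $t$-degree occurring in the element. In the resulting term the new factor $e_{i,l}t^{s+r}$ is visibly distinct from all existing factors, so no cancellation can occur. This should force every factor of the invariant to commute with all of $\fg$, i.e.\ to lie in $\fz(\fg)$.
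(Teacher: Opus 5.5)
\textbf{Verdict: there is a genuine gap in the reduction step.} Your large-$s$ idea is the right one, and your observations that squares are invariant and that $(1,1)$ can be traded for $z_r$ are fine.

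\textbf{The first reduction.} The square-free monomials give an isomorphism of $R$-modules $S_{\super}(\fg)\cong R\otimes_\kk\Lambda(\fg)$. This isomorphism is not $\fg$-equivariant for the adjoint action on $\Lambda(\fg)$. So ``$R$ consists of invariants and $\fg$ acts by $R$-linear derivations'' does not give $S_{\super}(\fg)^\fg=R\otimes_\kk\Lambda(\fg)^\fg$. For example, writing $e_{i,j}$ for $e_{i,j}t^0$, in $S_{\super}(\fg)$ one has
\[
e_{1,2}\cdot(e_{2,1}e_{2,2})=(e_{1,1}+e_{2,2})e_{2,2}+e_{2,1}e_{1,2}=e_{1,1}e_{2,2}+e_{2,2}^2+e_{2,1}e_{1,2}.
\]
The term $e_{2,2}^2\in R$ is invisible in $\Lambda(\fg)$. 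After your change of coordinates the same thing happens: $e_{1,2}\cdot(e_{2,1}z_0)=z_0^2+\cdots$. So the matrix of $x\in\fg$ in your $R$-basis has entries in $R$, not in $\kk$, and $R$-linearity alone does not relate the invariants to those of $\Lambda(\fg)$.

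\textbf{The second splitting.} It has the same defect. $V$ is not $\mathrm{ad}\,\fg$-stable, since $[e_{1,2},e_{2,1}]=e_{1,1}+e_{2,2}=z_0+\sum_{i\ge 3}e_{i,i}$. Hence $\Lambda(\fz)\otimes\Lambda(V)$ is not a tensor product of $\fg$-modules, and ``$\Lambda(V)^\fg=\kk$'' is not meaningful as stated.

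Both steps could be patched by filtering by the degree of the square-free part. The correction terms lower this degree by $2$. So the top component of an invariant is invariant for the untwisted action, hence lies in $R\otimes\Lambda(\fz)\subseteq R[z_r]$. Those elements are genuine invariants and can be subtracted off. As written, however, the reduction does not follow, and the one step with real content is only sketched (``I would\dots should\dots'').

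\textbf{What the paper does instead.} The detour is unnecessary. The paper works directly in $S_{\super}(\fg)$, viewed as a free module over $I(\fg)=\kk[z_r,(e_{i,j}t^r)^2]$ (with $(i,j)\neq(1,1)$ even), and reruns the argument of \cite{BT18}. Your large-$s$ idea is what makes this work, but it should be phrased with partial derivatives rather than ``no cancellation can occur''.

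In characteristic $2$ each $\partial/\partial y$ preserves the ideal generated by the odd squares. One has $x\cdot f=\sum_y(\partial f/\partial y)[x,y]$, with $y$ running over the basis vectors $e_{i,j}t^r$. If $s$ exceeds every $t$-degree occurring in $f$, then
\[
e_{k,l}t^s\cdot f=\sum_{j,r}\frac{\partial f}{\partial(e_{l,j}t^r)}\,e_{k,j}t^{r+s}-\sum_{i,r}\frac{\partial f}{\partial(e_{i,k}t^r)}\,e_{i,l}t^{r+s},
\]
and the new variables occur linearly and independently over the old ones. Comparing coefficients gives two facts about an invariant $f$:
\begin{itemize}
\item all off-diagonal partial derivatives of $f$ vanish;
\item $\partial f/\partial(e_{k,k}t^r)=\partial f/\partial(e_{l,l}t^r)$ for all $k,l,r$.
\end{itemize}
In the second fact the two diagonal contributions to $e_{k,l}t^{r+s}$ \emph{do} cancel, contrary to your claim, and that cancellation is exactly why the $z_r$ survive.

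Finally, $\partial f/\partial y=0$ means $f$ does not involve $y$ when $y$ is odd, and is a polynomial in $y^2$ when $y$ is even. Passing to coordinates with $z_r$ in place of $e_{1,1}t^r$ then gives $f\in\kk[z_r,(e_{i,j}t^r)^2]$ directly. There is no need to compute $\Lambda(\fg)^\fg$ or $\Lambda(V)^\fg$ separately.
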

\begin{proof}
The proof is essentially the same as \cite[Lemma 2.1]{CH23} and \cite[Lemma 3.2]{BT18}. Let us point out the minor differences from therein.
Let $I(\fg)$ be the subalgebra of $S_{\super}(\fg)$ generated by $\{z_r;~r\geq 0\}$ and $(\fg_{\bar 0})^2$.
Let
\[
B_0:=\{(i,j,r);~1\leq i, j\leq m+n~{\rm with}~(i,j)\neq (1,1), \bar i+\bar j=0\!\!\!\!\! \mod 2, r\geq 0\},
\]
\[
B_1:=\{(i,j,r);~1\leq i, j\leq m+n~{\rm with}~\bar i+\bar j=1\!\!\!\!\! \mod 2, r\geq 0\},
\]
and $B:=B_0\cup B_1$ for short.
In \cite[Lemma 2.1]{CH23}, we don't have the odd elements if $p=2$.
The proof there follows immediately from \cite[Lemma 3.2]{BT18}.
However, in our situation, we consider the $S_{\super}(\fg)$ instead of $S(\fg)$.
We still have
\[
S_{\super}(\fg)=\kk[z_r;~r\geq 0][e_{i,j}t^r;~(i,j,r)\in B_0]\otimes \Lambda[e_{i,j}t^r;~(i,j,r)\in B_1],
\]
\[
I(\fg)=\kk[z_r;~r\geq 0][(e_{i,j}t^r)^2;~(i,j,r)\in B_0], 
\]
and $S_{\super}(\fg)$ is free as an $I(\fg)$-module.
Then we can repeat the proof of \cite[Lemma 2.1]{CH23}
without any change to obtain our desired result.
\end{proof}

We define the {\it $p$-center} of $U_{\super}(\fg)$ to be the image of the $p$-center of $U(\fg)$ in $U_{\super}(\fg)$,
and denote it by $Z_{p,\super}(\fg)$.
Recall from \eqref{zpg} that $Z_p(\fg)$ is free polynomial algebra generated by 
\[
\{(e_{i,j}t^r)^p-\delta_{i,j}e_{i,j}t^{rp};~1\leq i,j\leq m+n, r\geq 0\}.
\]
In particular, the image $Z_{p,\super}(\fg)$ is generated by 
\begin{align}\label{generators of zp super of g}
\{(e_{i,j}t^r)^p-\delta_{i,j}e_{i,j}t^{rp};~1\leq i,j\leq m+n, \bar i+\bar j=0\!\!\!\!\! \mod 2, r\geq 0\}.    
\end{align}

The following theorem is a super generalization of \cite[Theorem 3.4]{BT18} for $p=2$, see also \cite[Theorem 2.3]{CH23}.
The proof is similar to that of \cite[Theorem 3.4]{BT18}.
Since our setting is different, we provide a detailed proof here.
\begin{Theorem}\label{theorem: center Zsuper g}
The center $Z_{\super}(\fg)$ of $U_{\super}(\fg)$ is generated by $\{z_r;~r\geq 0\}$ and $Z_{p,\super}(\fg)$.
Moreover:
\begin{itemize}
\item[(1)] $Z_{p,\super}(\fg)$ is freely generated by the elements in \eqref{generators of zp super of g};
\item[(2)] $Z_{\super}(\fg)$ is freely generated by 
\begin{align}\label{generators of zsuper g}
\{z_r;~r\geq 0\}\cup\{(e_{i,j}t^r)^2-&\delta_{i,j}e_{i,j}t^{2r};~1\leq i, j\leq m+n~\\\nonumber
&{\rm with}~(i,j)\neq (1,1), \bar i+\bar j=0\!\!\!\!\! \mod 2, r\geq 0\}.      
\end{align}

\end{itemize}
\end{Theorem}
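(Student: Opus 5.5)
Our plan follows the strategy of \cite[Theorem 3.4]{BT18}: produce enough central elements, show via PBW that they are algebraically independent, and then use the associated graded algebra and Lemma \ref{Lem: invariant subalgebra S super gg} to show they generate the whole center.

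\textbf{Step 1: the listed elements are central.} The elements $z_r$ lie in $\fz(\fg)$, so they are central in $U_{\super}(\fg)$. Each element $(e_{i,j}t^r)^2-\delta_{i,j}e_{i,j}t^{2r}$ equals $\xi(e_{i,j}t^r)$, which is central in $U(\fg)$. Hence its image is central in $U_{\super}(\fg)$. For odd $(i,j)$ we have $y^2-y^{[2]}=0$ in $U_{\super}(\fg)$. Therefore $Z_{p,\super}(\fg)$ is generated by \eqref{generators of zp super of g}, and $\kk[z_r]\,Z_{p,\super}(\fg)\subseteq Z_{\super}(\fg)$.

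\textbf{Step 2: free generation, via leading terms.} The filtration puts $e_{i,j}t^r$ in degree $r+1$. In $\gr U_{\super}(\fg)\cong S_{\super}(\fg)$:
\begin{itemize}
\item the symbol of $\xi(e_{i,j}t^r)$ is $(e_{i,j}t^r)^2$, since $e_{i,i}t^{2r}$ has degree $2r+1<2r+2$;
\item the symbol of $z_r$ is $z_r$.
\end{itemize}
By Lemma \ref{Lem: invariant subalgebra S super gg}, the set \eqref{free generators of ssuper gg} is algebraically independent in $S_{\super}(\fg)$. Since $S_{\super}(\fg)$ is a domain on its even polynomial part, algebraic independence of the symbols lifts to algebraic independence of the elements themselves; this is the standard argument with top-degree components. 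This gives (2) once generation is shown.

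For (1) we still need the generator with $(i,j)=(1,1)$. We note $z_r^2-z_{2r}=\sum_i \xi(e_{i,i}t^r)$ modulo commutators. More precisely, $\xi$ is semilinear, so $\xi(z_r)=z_r^2-z_r^{[2]}=z_r^2-z_{2r}$, giving
\[
\xi(e_{1,1}t^r)=z_r^2-z_{2r}-\sum_{i\ge2}\xi(e_{i,i}t^r).
\]
Consequently the algebra generated by \eqref{generators of zsuper g} equals $\kk[z_r]\,Z_{p,\super}(\fg)$. For the freeness in (1), the symbols of the generators in \eqref{generators of zp super of g} are the squares $(e_{i,j}t^r)^2$ with $\bar i+\bar j=0$. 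These are algebraically independent in $S(\fg_{\bar0})\subseteq S_{\super}(\fg)$, so the same leading-term argument applies.

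\textbf{Step 3: generation (the main obstacle).} Let $A$ be the subalgebra generated by \eqref{generators of zsuper g}. By Step 2, $\gr A$ is the algebra generated by the symbols, namely $\kk[z_r][(e_{i,j}t^r)^2]$, which by Lemma \ref{Lem: invariant subalgebra S super gg} equals $S_{\super}(\fg)^\fg$. Here the point is that the symbols generate freely, so $\gr$ of the generated algebra is the algebra generated by the symbols. Combining with \eqref{grzsuper shuyu Ssuper gg} gives
\[
S_{\super}(\fg)^\fg=\gr A\subseteq \gr Z_{\super}(\fg)\subseteq S_{\super}(\fg)^\fg .
\]
Hence $\gr A=\gr Z_{\super}(\fg)$. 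Since $A\subseteq Z_{\super}(\fg)$ and the filtration is exhaustive and bounded below, induction on filtration degree gives $A=Z_{\super}(\fg)$.

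The delicate point is the claim that $\gr A$ is spanned by the products of symbols. This needs the PBW basis of Proposition \ref{prop:PBW for u superg} to guarantee that distinct monomials in the generators have distinct, independent top symbols, with no cancellation. That holds because the symbol map is multiplicative and the symbols are free in $S_{\super}(\fg)$.
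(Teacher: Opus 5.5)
Your proposal is correct and follows essentially the same route as the paper. You take leading symbols $(e_{i,j}t^r)^2$ and $z_r$ in $S_{\super}(\fg)$, lift algebraic independence from Lemma \ref{Lem: invariant subalgebra S super gg}, and close with the sandwich $S_{\super}(\fg)^{\fg}\subseteq \gr A\subseteq \gr Z_{\super}(\fg)\subseteq S_{\super}(\fg)^{\fg}$. Two remarks: your identity $\xi(z_r)=z_r^2-z_{2r}$ usefully makes explicit why the first sentence of the theorem holds, and your ``delicate point'' is unnecessary, because the sandwich only uses the automatic inclusion into $\gr A$ of the algebra generated by the symbols.
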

\begin{proof}
For $1\leq i,j\leq m+n$ with $\bar i+\bar j=0\!\!\!\mod 2$ and $r\geq 0$,
we have that the element $(e_{i,j}t^r)^2-\delta_{i,j}e_{i,j}t^{2r}$ is of filtered degree $2r+2$
and 
\[
\gr_{2r+2}[(e_{i,j}t^r)^2-\delta_{i,j}e_{i,j}t^{2r}]=(e_{i,j}t^r)^2\in S_{\super}(\fg).
\]
Since the elements $\{(e_{i,j}t^r)^2;~1\leq i,j\leq m+n, \bar i+\bar j=0 \!\!\! \mod 2, r\geq 0\}$ are algebraically independent in $S_{\super}(\fg)$,
it follows that the elements in \eqref{generators of zp super of g} are algebraically independent in $Z_{p,\super}(\fg)$, this proves (1).

Clearly, all the elements in \eqref{generators of zsuper g} are contained in $Z_{\super}(\fg)$.
Let $Z$ be the subalgebra of $Z_{\super}(\fg)$ generated by \eqref{generators of zsuper g}.
For $r\geq 0$ we have that $z_r\in{\rm F}_{r+1}U_{\super}(\fg)$ and $\gr_{r+1}z_r=z_r\in S_{\super}(\fg)$.
Lemma \ref{Lem: invariant subalgebra S super gg} implies that the elements \eqref{generators of zsuper g} are lifts of the algebraically independent generators of $S_{\super}(\fg)$ from \eqref{free generators of ssuper gg}.
It follows that the elements \eqref{generators of zsuper g} are themselves algebraically independent, and moreover $S_{\super}(\fg)^{\fg}\subseteq \gr Z$.
Thanks to \eqref{grzsuper shuyu Ssuper gg},
we also have
\[
\gr Z\subseteq \gr Z_{\super}(\fg)\subseteq S_{\super}(\fg)^{\fg},
\]
so equality must hold throughout.
This ensures that $Z=Z_{\super}(\fg)$.
\end{proof}
\section{Super Yangian in characteristic $2$}\label{section:super Yangian}
\subsection{Modular Yangian $Y_{m+n}$}
The Yangian $Y_{m+n}$ over $\kk$ is defined to be the associative algebra with generators $\{t_{i,j}^{(r)};~1\leq i,j\leq m+n, r>0\}$ subject just to the relations \eqref{RTT relations}.
Using {\em Gauss decomposition},
Brundan and Topley established the following modular analogue of \cite[Theorem 5.2]{BK05},
see \cite[Theorem 4.3]{BT18} and \cite[Theorem 3.8]{CH23}.
\begin{Theorem}\label{thm: Drinfled presentation ym+n}
The Yangian $Y_{m+n}$ is generated by the elements $\{d_i^{(r)},d_i'^{(r)};~1\leq i\leq m+n, r\geq 1\}$ and $\{e_j^{(r)},f_j^{(r)};~1\leq j\leq m+n-1, r\geq 1\}$ subject only to the following relations:
\begin{align}\label{di di' relation}
d_i^{(0)}=1,~\sum\limits_{t=0}^rd_i^{(t)}d_i'^{(r-t)}=\delta_{r0};
\end{align}
\begin{align}\label{di dj commu}
[d_i^{(r)},d_j^{(s)}]=0;
\end{align}
\begin{align}\label{Drinfeld generators relation 1}
[d_i^{(r)},e_j^{(s)}]=(\delta_{ij}-\delta_{i,j+1})\sum\limits_{t=0}^{r-1}d_i^{(t)}e_j^{(r+s-1-t)};
\end{align}
\begin{align}\label{Drinfeld generators relation 2}
[d_i^{(r)},f_j^{(s)}]=(\delta_{i,j+1}-\delta_{ij})\sum\limits_{t=0}^{r-1}f_j^{(r+s-1-t)}d_i^{(t)};
\end{align}
\begin{align}\label{Drinfeld generators relation 3}
[e_i^{(r)},f_j^{(s)}]=-\delta_{ij}\sum\limits_{t=0}^{r+s-1}d_i'^{(t)}d_{i+1}^{(r+s-1-t)};
\end{align}
\begin{align}\label{Drinfeld generators relation 4}
[e_j^{(r)},e_j^{(s)}]=(\sum\limits_{t=1}^{s-1}e_j^{(t)}e_j^{(r+s-1-t)}-\sum\limits_{t=1}^{r-1}e_j^{(t)}e_j^{(r+s-1-t)});
\end{align}
\begin{align}\label{Drinfeld generators relation 5}
[f_j^{(r)},f_j^{(s)}]=(\sum\limits_{t=1}^{r-1}f_j^{(t)}f_j^{(r+s-1-t)}-\sum\limits_{t=1}^{s-1}f_j^{(t)}f_j^{(r+s-1-t)});
\end{align}
\begin{align}\label{Drinfeld generators relation 6}
[e_j^{(r+1)},e_{j+1}^{(s)}]-[e_j^{(r)},e_{j+1}^{(s+1)}]=e_j^{(r)}e_{j+1}^{(s)};
\end{align}
\begin{align}\label{Drinfeld generators relation 7}
[f_j^{(r+1)},f_{j+1}^{(s)}]-[f_j^{(r)},f_{j+1}^{(s+1)}]=-f_{j+1}^{(s)}f_{j}^{(r)};
\end{align}
\begin{align}\label{Drinfeld generators relation 0}
[e_i^{(r)},e_j^{(s)}]=0=[f_i^{(r)},f_j^{(s)}]\quad\mbox{if}~~|i-j|>1;
\end{align}
\begin{align}\label{Drinfeld generators relation 8}
[[e_i^{(r)},e_j^{(s)}],e_j^{(t)}]+[[e_i^{(r)},e_j^{(t)}],e_j^{(s)}]=0,\quad\mbox{if}~~|i-j|=1;
\end{align}
\begin{align}\label{Drinfeld generators relation 9}
[[f_i^{(r)},f_j^{(s)}],f_j^{(t)}]+[[f_i^{(r)},f_j^{(t)}],f_j^{(s)}]=0,\quad\mbox{if}~~|i-j|=1;
\end{align}
\begin{align}\label{Drinfeld generators relation 10}
[[e_i^{(r)},e_j^{(t)}],e_j^{(t)}]=0\quad\mbox{if}~~|i-j|=1;
\end{align}
\begin{align}\label{Drinfeld generators relation 11}
[[f_i^{(r)},f_j^{(t)}],f_j^{(t)}]=0,\quad\mbox{if}~~|i-j|=1;
\end{align}
\begin{align}\label{Drinfeld generators relation 12}
[[e_{i-1}^{(r)},e_i^{(1)}],[e_i^{(1)},e_{i+1}^{(s)}]]=0;
\end{align}
\begin{align}\label{Drinfeld generators relation 13}
[[f_{i-1}^{(r)},f_i^{(1)}],[f_i^{(1)},f_{i+1}^{(s)}]]=0.
\end{align}
\end{Theorem}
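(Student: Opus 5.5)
The plan is to follow the Gauss decomposition argument of \cite[Theorem 4.3]{BT18}, which in turn adapts \cite[Theorem 5.2]{BK05}. Note that for $\Char\kk=2$ all signs disappear, so the relations \eqref{RTT relations for Ym|n} coincide with \eqref{RTT relations} and $Y_{m+n}$ is exactly the algebra treated in \cite{BT18}. First I will define the Drinfeld generators inside $Y_{m+n}$. Form the matrix $T(u)=\bigl(t_{i,j}(u)\bigr)$ with $t_{i,j}(u)=\sum_{r\ge0}t_{i,j}^{(r)}u^{-r}$ and $t_{i,j}^{(0)}=\delta_{i,j}$. Its Gauss factorization $T(u)=F(u)D(u)E(u)$ has entries in $Y_{m+n}[[u^{-1}]]$; this makes sense over any field, because the leading minors are invertible power series with constant term $1$. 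The coefficients give $d_i^{(r)},d_i'^{(r)},e_j^{(r)},f_j^{(r)}$. By expressing the $t_{i,j}^{(r)}$ through quasideterminants, these elements generate $Y_{m+n}$.

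Next I will verify that the listed relations hold in $Y_{m+n}$. The relations \eqref{di di' relation}--\eqref{Drinfeld generators relation 5} come from the RTT relation for $2\times2$ and $3\times3$ blocks, using the embeddings $Y_2\hookrightarrow Y_{m+n}$ and the shift homomorphisms $\psi_k$ exactly as in \cite{BK05}. Those computations are integral, meaning they only use integer coefficients, and so they are valid in characteristic $2$. The Serre-type relations \eqref{Drinfeld generators relation 8}--\eqref{Drinfeld generators relation 13} need more care, since in characteristic zero one usually derives the symmetrized cubic relation and divides by $2$. This is why both \eqref{Drinfeld generators relation 8} and \eqref{Drinfeld generators relation 10} are listed separately, and also the quartic relations \eqref{Drinfeld generators relation 12}, \eqref{Drinfeld generators relation 13}. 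I will check these following \cite{BT18}, either by passing to $\ZZ$-forms or by direct calculation in $Y_3$ and $Y_4$.

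For the completeness of the relations, let $\hat Y$ be the abstract algebra with these generators and relations, and let $\hat Y\to Y_{m+n}$ be the canonical surjection. I will define a spanning set for $\hat Y$ by building higher root elements $e_{i,j}^{(r)},f_{i,j}^{(r)}$ through iterated brackets such as $e_{i,j+1}^{(r)}=[e_{i,j}^{(r)},e_j^{(1)}]$. Using the relations, I will show that the ordered monomials in the $d_i^{(r)}$, $e_{i,j}^{(r)}$ and $f_{i,j}^{(r)}$ span $\hat Y$, by a filtration-degree induction. On the other side, the images of these elements in $Y_{m+n}$ have associated graded images $\pm e_{j,i}t^{r-1}$ and $\pm e_{i,j}t^{r-1}$ in $\gr Y_{m+n}\cong U(\fg)$ (cf. \cite[Lemma 4.2]{BT18}). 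The PBW theorem then gives linear independence, and so the surjection is an isomorphism.

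I expect the spanning step to be the main obstacle. In \cite{BK05} it relies on showing that the brackets of root elements close up modulo lower terms, and this uses the Serre relations in a form where characteristic zero enters through $\frac12$. In characteristic $2$ one must show that the relations \eqref{Drinfeld generators relation 10}, \eqref{Drinfeld generators relation 12} and \eqref{Drinfeld generators relation 13} suffice. In particular, $[e_{i,j}^{(r)},e_{i,j}^{(s)}]$ must lie in the span of lower monomials, which is not automatic when $2=0$. I would first try to reproduce the argument of \cite[Theorem 4.3]{BT18} verbatim, or reduce to the $\ZZ$-form of the Yangian and base change, since \cite{BT18} treats arbitrary characteristic.
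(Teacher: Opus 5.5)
Your outline matches the paper. The paper does not reprove this theorem; it imports it directly from \cite[Theorem 4.3]{BT18} (see also \cite[Theorem 3.8]{CH23}), which is the Gauss-decomposition argument of \cite[Theorem 5.2]{BK05} carried over to arbitrary characteristic that you propose to reproduce. You have also correctly located the characteristic-$2$ subtlety: the symmetrized Serre relations become vacuous for $r=s$ when $2=0$, which is why \eqref{Drinfeld generators relation 10}--\eqref{Drinfeld generators relation 13} (and the summed form of \eqref{Drinfeld generators relation 4}) appear as separate relations.
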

The generators of $Y_{m+n}$ in Theorem \ref{thm: Drinfled presentation ym+n} above are called {\it Drinfeld generators}. 
In order to state the PBW theorem we define higher root elements as follows. 
For $i=1,\dots,m+n-1$, we set
\[
e_{i,i+1}^{(r)}:=e_i^{(r)},\quad  f_{i+1,i}^{(r)}:=f_{i}^{(r)}   
\]
and define inductively
\[
e_{i,j}^{(r)}:=[e_{i,j-1}^{(r)},e_{j-1}^{(1)}],\quad f_{j,i}^{(r)}:=[f_{j-1}^{(1)},f_{j-1,i}^{(r)}]
\]
for all $1\leq i<j\leq m+n$ and $r>0$.
In fact, the Drinfeld generators can be expressed in terms of {\it quasi-determinants} of \cite{GR97}, see \cite[Section 5]{BK05}.

The {\it loop filtration} on $Y_{m+n}$ is defined by placing the elements $e_{i,j}^{(r+1)}$,
$d_{i}^{(r+1)}$, $f_{j,i}^{(r+1)}$ in filtered degree $r$ for all $r\geq 0$. 
We write $\cF_r Y_{m+n}$ for the filtered piece of degree $r$,
so that $Y_{m+n}=\cup_{r\geq 0}\cF_r Y_{m+n}$,
and we write $\gr Y_{m+n}$ for the associated graded algebra.
By the proof of \cite[Theorem 4.3]{BT18} there is an isomorphism
\begin{align}\label{ug iso grymn}
\tilde{\psi}: U(\fg)\rightarrow \gr Y_{m+n}
\end{align}
defined by
\begin{equation}
\begin{aligned}\label{gr def}
e_{i,i}t^r\mapsto \gr_r d_i^{(r+1)}\\
e_{i,j}t^r\mapsto \gr_r e_{i,j}^{(r+1)}\\
e_{j,i}t^r\mapsto \gr_r f_{j,i}^{(r+1)}
\end{aligned}
\end{equation}
for $i<j$.
This gives the following PBW theorem for $Y_{m+n}$ in terms of Drinfeld generators \cite[Theorem 4.5]{BT18}.
\begin{Theorem}\label{thm:PBW ymn Drinfeld}
Ordered monomials in the elements
\begin{align}\label{Drinfeld generators}
 \{d_i^{(r)};~1\leq i\leq m+n, r>0\}\cup\{e_{i,j}^{(r)},f_{j,i}^{(r)};~1\leq i<j\leq m+n, r>0\}   
\end{align}
taken in any fixed ordering form a basis of $Y_{m+n}$.
\end{Theorem}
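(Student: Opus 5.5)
The plan is to deduce the basis statement from the isomorphism $\tilde{\psi}\colon U(\fg)\to \gr Y_{m+n}$ of \eqref{ug iso grymn}, combined with the ordinary PBW theorem for $U(\fg)$. The argument has three steps.

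First, I will fix a total order on the index set $\{(i,j,r);~1\leq i,j\leq m+n,~r\geq 0\}$. By the classical PBW theorem (the statement for $U(\fg)$ underlying Proposition \ref{prop:PBW for u superg}), ordered monomials in the basis $\{e_{i,j}t^r\}$ of $\fg$ form a basis of $U(\fg)$. Under \eqref{gr def}, $\tilde{\psi}$ sends $e_{i,j}t^r$ to the top symbol $\gr_r$ of the corresponding element of \eqref{Drinfeld generators} with superscript $r+1$. Since $\tilde{\psi}$ is an algebra isomorphism, the ordered monomials in these symbols $\gr_r d_i^{(r+1)}$, $\gr_r e_{i,j}^{(r+1)}$, $\gr_r f_{j,i}^{(r+1)}$ form a basis of $\gr Y_{m+n}$. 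Moreover this basis is homogeneous: a monomial in the symbols of total degree $\sum r$ is exactly the degree-$\sum r$ symbol of the corresponding ordered monomial in $Y_{m+n}$.

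Second, I will lift this to $Y_{m+n}$ by the standard filtered argument. For spanning, I induct on filtered degree. An element of $\cF_d Y_{m+n}$ has image in $\cF_d/\cF_{d-1}$ equal to a combination of symbols of ordered monomials of degree $d$. Subtracting the corresponding combination of actual ordered monomials leaves an element of $\cF_{d-1}$, and the induction proceeds. The base case is $\cF_{-1}=0$, with $\cF_0$ handled in the same way. For linear independence, take a nontrivial relation among ordered monomials and let $d$ be the maximal filtered degree of a monomial appearing in it. Looking at its image in $\cF_d/\cF_{d-1}$ gives a nontrivial relation among the degree-$d$ basis symbols of $\gr Y_{m+n}$, which is a contradiction. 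Since the order was arbitrary, the statement holds for any fixed ordering.

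The main obstacle is the input that $\tilde{\psi}$ is well defined and bijective. I take this from the proof of \cite[Theorem 4.3]{BT18}, as the paper indicates. Well-definedness comes from the Drinfeld relations of Theorem \ref{thm: Drinfled presentation ym+n}, whose leading terms reproduce \eqref{lie bracket of g}. Surjectivity holds because the displayed symbols generate $\gr Y_{m+n}$. Injectivity is the hard part; it uses the RTT side, comparing with the canonical filtration and the known PBW basis in the $t_{i,j}^{(r)}$, in order to bound $\gr Y_{m+n}$ from below. Granting \eqref{ug iso grymn}, the remaining steps above are routine.
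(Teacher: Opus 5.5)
Your proposal is correct and follows the paper's route: the paper obtains this PBW theorem, citing \cite[Theorem 4.5]{BT18}, directly from the isomorphism $\tilde{\psi}\colon U(\fg)\to\gr Y_{m+n}$ of \eqref{ug iso grymn} together with the ordinary PBW theorem for $U(\fg)$, and your filtered spanning and independence argument is the standard unpacking of that deduction. Your side remark on why $\tilde{\psi}$ is well defined is loose, because the Drinfeld relations only involve the simple-root generators, so recovering \eqref{lie bracket of g} for the higher root elements needs either a presentation of $\fg$ or a comparison with the $t_{i,j}^{(r)}$; but since you, like the paper, import $\tilde{\psi}$ from \cite{BT18}, nothing in your argument depends on it.
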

\subsection{Super Yangians in characteristic $2$}
Let $u$ be an indeterminate,
and consider the power series ring $Y_{m+n}[[u^{-1}]]$.
We define the power series
\[
d_i(u):=\sum\limits_{r\geq 0}d_i^{(r)}u^{-r}\in Y_{m+n}[[u^{-1}]]
\]
for all $i=1,\dots,m+n$.
We proceed to recall the description of the center of $Y_{m+n}$.
We first define
\begin{align}\label{cr}
c(u)=\sum\limits_{r\geq 0}c^{(r)}u^{-r}:=d_1(u)d_2(u-1)\cdots d_{m+n}(u-(m+n)+1).
\end{align}

According to \cite[Theorem 5.11(1)]{BT18} (see also \cite[Theorem 7.2]{BK05}),
the elements in $\{c^{(r)};~r>0\}$ are algebraically independent and lie in the center $Z(Y_{m+n})$ of $Y_{m+n}$.
The subalgebra they generate is called the {\it Harish-Chandra center} of $Y_{m+n}$, 
and is denoted $Z_{\HC}(Y_{m+n})$.

For $i=1,\dots,m+n$, we define
\begin{align}\label{biu}
b_i(u)=\sum_{r\geq 0}b_i^{(r)}:=d_i(u)d_i(u-1).
\end{align}
Remember that $p=2$.
By \cite[Theorem 5.11(2)]{BT18} the elements in
\begin{align}\label{generators of p-center of yangian}
\{b_i^{(2r)};~i=1,\dots,m+n,r>0\}\cup\{(e_{i,j}^{(r)})^2, (f_{j,i}^{(r)})^2;~1\leq i<j\leq m+n, r>0\}   
\end{align}
are algebraically independent, and lie in $Z(Y_{m+n})$.
The subalgebra they generate is called the {\it $p$-center} of $Y_{m+n}$ and is denoted $Z_p(Y_{m+n})$.
In fact, $Z_p(Y_{m+n})$ is a free polynomial algebra over the generators given in \eqref{generators of p-center of yangian}.
Moreover,
the $Z(Y_{m+n})$ is generated by $Z_{\HC}(Y_{m+n})$ and $Z_p(Y_{m+n})$.

In view of \eqref{RTT relations}, we can equip $Y_{m+n}=(Y_{m+n})_{\bar 0}\oplus (Y_{m+n})_{\bar 1}$ with the $\ZZ_2$-gradation, where the parity of $t_{i,j}^{(r)}$ is defined by $\bar i+\bar j\mod 2$. The elements of $(Y_{m+n})_{\bar 0}$ are called {\it even}, those of $(Y_{m+n})_{\bar 1}$ {\it odd}.
Using \cite[(4.6)-(4.9)]{BT18}, it is easy to see that all $d_i^{(r)}$ are even.
Moreover, the elements $e_{i,j}^{(r)}$ and $f_{j,i}^{(r)}$ are even (resp. odd) if $\bar i+\bar j=0\mod 2$ (resp. $\bar i+\bar j=1\mod 2$).

Let $Z_{p,\odd}$ be the subalgebra of $Z_p(Y_{m+n})$ generated by
\begin{align}\label{generators of p odd ym+n}
\{(e_{i,j}^{(r)})^2, (f_{j,i}^{(r)})^2;~1\leq i<j\leq m+n,~\bar i+\bar j=1\!\!\!\!\! \mod 2, r>0\}.
\end{align}
We define $Z_{p,\odd}(Y_{m+n})_+$ to be the maximal ideal of $Z_{p,\odd}(Y_{m+n})$ generated by the elements given in \eqref{generators of p odd ym+n}.
Now we can define the {\it super Yangian}
\begin{align}\label{definition of ym|n}
Y_{m|n}:=Y_{m+n}/Y_{m+n}Z_{p,\odd}(Y_{m+n})_+. 
\end{align}
\begin{Remark}
Actually, the following relations hold in $Y_{m|n}$ for all $r,s\geq 1$ (cf. \cite[(41)]{Gow07}):
\[
[e_{m}^{(r)},e_{m}^{(s)}]=0=[f_{m}^{(r)},f_{m}^{(s)}].   
\]
We can assume, without loss of generality, that $r<s$.
It follows from \eqref{Drinfeld generators relation 4} that
\[
[e_{m}^{(r)},e_{m}^{(s)}]=\sum\limits_{t=r}^{s-1}e_m^{(t)}e_m^{(r+s-1-t)}.    
\]
Since $(e_m^{(r)})^2=0$ in $Y_{m|n}$ for all $r>0$ by our definition,
the assertion immediately follows from the induction on $s-r$.
\end{Remark}
\begin{Theorem}\label{theorem: PBW theorem for ym|n}
The Yangian $Y_{m+n}$ is free as a module over $Z_{p,\odd}$ with basis given by the ordered supermonomials in the elements \eqref{Drinfeld generators}.
In particular, the images in $Y_{m|n}$ of the ordered supermonomials in the elements \eqref{Drinfeld generators} form a basis of $Y_{m|n}$.
\end{Theorem}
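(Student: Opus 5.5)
The plan is to reduce the statement to Theorem \ref{thm:PBW ymn Drinfeld} by a standard rewriting argument using centrality of $Z_{p,\odd}$. Since the elements \eqref{generators of p odd ym+n} lie in $Z(Y_{m+n})$ and are algebraically independent (by \cite[Theorem 5.11(2)]{BT18}), $Z_{p,\odd}$ is a free polynomial algebra on these squares. Fix the ordering of \eqref{Drinfeld generators} used in Theorem \ref{thm:PBW ymn Drinfeld}. Any ordered monomial $x_1^{a_1}\cdots x_s^{a_s}$ in these elements can be rewritten by writing $a_j=2q_j+\epsilon_j$ with $\epsilon_j\in\{0,1\}$ for each odd $x_j$. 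Centrality of $x_j^2$ then lets us pull the factors $(x_j^2)^{q_j}$ to the front, so the monomial equals $\prod_{x_j\,\mathrm{odd}}(x_j^2)^{q_j}$ times an ordered supermonomial. This rewriting is a bijection between ordered monomials and pairs (monomial in the generators of $Z_{p,\odd}$, ordered supermonomial).

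From this bijection we get spanning and freeness together. Spanning: by Theorem \ref{thm:PBW ymn Drinfeld} the ordered monomials span $Y_{m+n}$, hence the products $z\cdot M$, with $z$ a monomial in the generators of $Z_{p,\odd}$ and $M$ an ordered supermonomial, span $Y_{m+n}$ over $\kk$. Consequently the ordered supermonomials generate $Y_{m+n}$ as a $Z_{p,\odd}$-module. Freeness: suppose $\sum_M c_M M=0$ with $c_M\in Z_{p,\odd}$. Expand each $c_M$ in the monomial basis of the polynomial algebra $Z_{p,\odd}$. Each product $z\cdot M$ is, by the rewriting above, exactly a distinct ordered monomial, and different pairs $(z,M)$ give different ordered monomials. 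Linear independence in Theorem \ref{thm:PBW ymn Drinfeld} then forces every coefficient to vanish. The key point to check is that $z\cdot M$ really equals an ordered monomial on the nose, with no lower-order correction terms. This holds because the $x_j^2$ are central, so moving them into position $j$ costs nothing. It is also why no filtration argument is needed, unlike in Proposition \ref{prop:PBW for u superg}.

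For the last assertion, observe that $Z_{p,\odd}(Y_{m+n})_+$ is central, so $Y_{m+n}Z_{p,\odd}(Y_{m+n})_+$ is a two-sided ideal. It equals $\bigoplus_M Z_{p,\odd}(Y_{m+n})_+ M$ by freeness. Hence
\[
Y_{m|n}\cong Y_{m+n}\otimes_{Z_{p,\odd}}Z_{p,\odd}/Z_{p,\odd}(Y_{m+n})_+\cong \bigoplus_M \kk\, \bar M,
\]
and the images of the ordered supermonomials form a basis of $Y_{m|n}$.

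The main obstacle is the input that the elements \eqref{generators of p odd ym+n} are central and algebraically independent. We take this from \cite[Theorem 5.11(2)]{BT18}, but it should be double-checked that the higher root elements $e_{i,j}^{(r)}$, $f_{j,i}^{(r)}$ used there are the same as those defined above. If they differ, the fallback is to pass to $\gr Y_{m+n}\cong U(\fg)$ via \eqref{ug iso grymn}. There, $\gr (e_{i,j}^{(r)})^2=(e_{i,j}t^{r-1})^2$, with $i<j$ of opposite parity, is central and algebraically independent in $U(\fg)$ by \eqref{zpg}, and independence would lift to $Y_{m+n}$.
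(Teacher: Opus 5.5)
Your proof is correct, but it takes a different route from the paper. The paper also proves that the products $zM$ form a basis of $Y_{m+n}$, where $z$ is an ordered monomial in \eqref{generators of p odd ym+n} and $M$ an ordered supermonomial in \eqref{Drinfeld generators}. It does this by passing to the loop filtration and the isomorphism $\tilde\psi:U(\fg)\to\gr Y_{m+n}$ of \eqref{ug iso grymn}. The top symbols of the $zM$ are the corresponding monomials in $U(\fg)$. These form a basis there because each odd $e_{i,j}t^r$ has $(e_{i,j}t^r)^{[2]}=0$, so $(e_{i,j}t^r)^2=\xi(e_{i,j}t^r)$ is central, and PBW for $U(\fg)$ applies. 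So the paper runs your rewriting one level down, where centrality is elementary. It then lifts the result using the standard fact that elements whose top symbols form a basis of $\gr Y_{m+n}$ form a basis of $Y_{m+n}$.

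You instead rewrite exactly in $Y_{m+n}$, using centrality of the odd squares upstairs so that $zM$ is literally an ordered monomial, and then quote Theorem \ref{thm:PBW ymn Drinfeld}. Your version has several advantages:
\begin{itemize}
\item it is shorter and avoids the filtration;
\item it gives the algebraic independence of \eqref{generators of p odd ym+n} for free (take $M=1$);
\item it makes the passage to the quotient $Y_{m|n}$ explicit, which the paper leaves implicit.
\end{itemize}
The paper's version makes the basis statement depend only on top symbols. It also sets up the $\gr$ machinery it reuses in Corollary \ref{coro:u super g iso gr ym|n} and for the center.

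One correction to your final paragraph: the fallback through $\gr Y_{m+n}$ would only recover algebraic independence, not centrality. Centrality of $(e_{i,j}t^{r-1})^2$ in $U(\fg)$ says nothing about centrality of $(e_{i,j}^{(r)})^2$ in $Y_{m+n}$. Centrality is precisely what your exact rewriting needs, and also what makes $Y_{m+n}Z_{p,\odd}(Y_{m+n})_+$ two-sided. Since the paper takes this centrality from \cite[Theorem 5.11(2)]{BT18} as well, your main argument stands.
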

\begin{proof}
It suffices to show that the set consisting of ordered monomials in \eqref{generators of p odd ym+n} multiplied by ordered supermonomials in \eqref{Drinfeld generators} gives a basis for $Y_{m+n}$.
To see this,
we pass to the associated graded algebra \eqref{ug iso grymn} using \eqref{gr def} to reduce to showing that the monomials
\[
\prod_{\substack{1\leq i\neq j\leq m+n\\ \bar i+\bar j=1\!\!\!\!\! \mod 2\\r \geq 0}}
(e_{i,j}t^r)^{2a_{i,j;r}}
\prod_{\substack{1 \leq i\leq m+n\\r \geq 0}}
(e_{i,i}t^r)^{b_{i,i;r}}
\prod_{\substack{1\leq i\neq j\leq m+n\\ \bar i+\bar j=0\!\!\!\!\! \mod 2\\r \geq 0}}
(e_{i,j}t^r)^{b_{i,j;r}}
\prod_{\substack{1\leq i\neq j\leq m+n\\ \bar i+\bar j=1\!\!\!\!\! \mod 2\\r \geq 0}}
(e_{i,j}t^r)^{c_{i,j;r}}
\]
for $a_{i,j;r}\geq 0$, $b_{i,j;r}\geq 0$ and $0\leq c_{i,j;r}\leq 1$ form a basis for $U(\fg)$.
This is obvious.
\end{proof}

The loop filtration on $Y_{m+n}$ descends to a loop filtration on $Y_{m|n}$.
We denote the filtered pieces by $\cF_r Y_{m|n}$ for $ r\geq 0$,
and write $\gr Y_{m|n}$ for the associated graded algebra.
The next corollary follows from Theorem \ref{theorem: PBW theorem for ym|n}.
\begin{Corollary}\label{coro:u super g iso gr ym|n}
The isomorphism $\tilde{\psi}:U(\fg)\stackrel{\sim}{\longrightarrow}\gr Y_{m+n}$ given in \eqref{ug iso grymn} induces an isomorphism
\[
\psi:U_{\super}(\fg)\stackrel{\sim}{\longrightarrow}\gr Y_{m|n}.
\]
\end{Corollary}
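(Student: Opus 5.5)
The plan is to compose $\tilde\psi$ with the natural surjection $\gr Y_{m+n}\twoheadrightarrow \gr Y_{m|n}$, show that this composite kills the defining ideal of $U_{\super}(\fg)$, and then prove bijectivity by comparing PBW bases. Write $\pi:Y_{m+n}\to Y_{m|n}$ for the quotient map. Since the loop filtration on $Y_{m|n}$ is the image filtration, $\pi$ induces a surjective graded algebra map $\gr\pi:\gr Y_{m+n}\to\gr Y_{m|n}$. Set $\Psi:=\gr\pi\circ\tilde\psi:U(\fg)\to\gr Y_{m|n}$. This map is surjective because $\tilde\psi$ is an isomorphism.

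First, $\Psi$ factors through $U_{\super}(\fg)$. The defining ideal of $U_{\super}(\fg)$ is generated by $y^2-y^{[2]}$ for $y\in\fg_{\bar 1}$. Since $\xi$ is $2$-semilinear and $\xi(\fg)$ is central, this ideal is generated by the elements $\xi(e_{i,j}t^r)=(e_{i,j}t^r)^2$ with $\bar i+\bar j=1$; here $(e_{i,j}t^r)^{[2]}=0$ because $i\neq j$. Suppose $i<j$ (the case $i>j$ is symmetric, using $f_{j,i}$). Then $\tilde\psi\big((e_{i,j}t^r)^2\big)=\big(\gr_r e_{i,j}^{(r+1)}\big)^2=\gr_{2r}\big((e_{i,j}^{(r+1)})^2\big)$, the symbol of $(e_{i,j}^{(r+1)})^2$ in degree $2r$. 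Because $(e_{i,j}^{(r+1)})^2$ is one of the generators \eqref{generators of p odd ym+n}, its image in $Y_{m|n}$ is $0$, so $\gr\pi$ kills this symbol. Hence $\Psi$ induces a surjection $\psi:U_{\super}(\fg)\to\gr Y_{m|n}$ given on generators by \eqref{gr def}.

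Second, $\psi$ is injective. By Proposition~\ref{prop:PBW for u superg}, $U_{\super}(\fg)$ has a basis of ordered supermonomials in the $e_{i,j}t^r$. Under $\psi$, each such supermonomial of filtered degree $d$ is sent to the degree-$d$ symbol of the image in $Y_{m|n}$ of the corresponding ordered supermonomial in the elements \eqref{Drinfeld generators}. By Theorem~\ref{theorem: PBW theorem for ym|n}, these images form a basis of $Y_{m|n}$. It remains to show that their degree-$d$ symbols are linearly independent in $\gr Y_{m|n}$. Equivalently, $\cF_dY_{m|n}$ must be spanned by the images of those supermonomials of total loop degree at most $d$. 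This is the main obstacle: one has to know that the filtration on $Y_{m|n}$ is compatible with the PBW basis.

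I would handle it as follows. By the first part of Theorem~\ref{theorem: PBW theorem for ym|n}, every element of $\cF_dY_{m+n}$ is a $Z_{p,\odd}$-linear combination of ordered supermonomials. Passing to $\gr$, the symbols of these products correspond to the basis of $U(\fg)$ displayed in the proof of that theorem. That basis is homogeneous, so an element of $\cF_dY_{m+n}$ expands using only products of loop degree at most $d$. Applying $\pi$ kills every term containing a positive power of an odd generator $(e_{i,j}^{(r)})^2$ or $(f_{j,i}^{(r)})^2$. What remains is a combination of images of supermonomials of degree at most $d$.

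Finally, linear independence in $\gr Y_{m|n}$ follows from linear independence of the images in $Y_{m|n}$ together with this filtered spanning statement. Hence $\psi$ maps a basis to a basis and is an isomorphism.
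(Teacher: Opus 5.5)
Your proposal is correct and follows the same route as the paper. You factor $\gr\pi\circ\tilde\psi$ through $U_{\super}(\fg)$ by noting that the squares of odd basis vectors go to symbols of generators of $Z_{p,\odd}$, and then use Proposition~\ref{prop:PBW for u superg} and Theorem~\ref{theorem: PBW theorem for ym|n} to see that a basis goes to a basis; your explicit filtered-spanning argument (that $\cF_dY_{m|n}$ is spanned by images of supermonomials of loop degree at most $d$, because the $Z_{p,\odd}$-basis of $Y_{m+n}$ is a filtered basis) supplies the step the paper leaves implicit in its final sentence.
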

\begin{proof}
The canonical projection $\pi:Y_{m+n}\rightarrow Y_{m|n}$ induces a surjective homomorphism $\gr\pi:\gr Y_{m+n}\rightarrow \gr Y_{m|n}$.
Given an odd element $e_{i,j}t^r$, 
the image of $(e_{i,j}t^r)^2)$ under $\tilde{\psi}$ is contained in the kernel of $\gr\pi$.
As a result,
$\tilde{\psi}$ induces a surjection $\psi:U_{\super}(\fg)\twoheadrightarrow\gr Y_{m|n}$.
We obtain the following commutative diagram.
\begin{center}
\begin{tikzcd}
U(\fg) \arrow[r, "\sim"',"\tilde{\psi}"] \arrow[d, two heads] & \gr Y_{m+n} \arrow[d, two heads,"\gr\pi"] \\
U_{\super}(\fg) \arrow[r, "\psi"']     & \gr Y_{m|n}
\end{tikzcd}     
\end{center}
Then we apply Proposition \ref{prop:PBW for u superg} in conjunction with Theorem \ref{theorem: PBW theorem for ym|n} to see that a basis for $U_{\super}(\fg)$ is sent to a basis for $\gr Y_{m|n}$.
Hence, $\psi$ is an isomorphism as required.
\end{proof}
\subsection{ The center of $Y_{m|n}$}
In this subsection,
we will describe the center $Z(Y_{m|n})$ of the super Yangian $Y_{m|n}$.
To do that we first recall that the Harish-Chandra center $Z_{\HC}(Y_{m+n})$ is generated by $\{c^{(r)};~r>0\}$ \eqref{cr}.
By abuse of notation, 
we still use the same notation $c^{(r)}$ to denote its image in $Y_{m|n}$.
In particular, the image of $Z_{\HC}(Y_{m+n})$ in $Y_{m|n}$ is generated by $\{c^{(r)};~r>0\}$.
We refer to this subalgebra of $Z(Y_{m|n})$ as the {\it Harish-Chandra center} of $Y_{m|n}$ and denote it by $Z_{\HC}(Y_{m|n})$.

Similarly we define the {\it $p$-center} of $Y_{m|n}$ to be the image of the $p$-center of $Y_{m+n}$ in $Y_{m|n}$, and denote it by $Z_p(Y_{m|n})$.
By \eqref{generators of p-center of yangian} and \eqref{definition of ym|n}, 
it is clear that $Z_p(Y_{m|n})$ is generated by 
\begin{equation}
\begin{aligned}\label{generators of zpym|n}
&\{b_i^{(2r)};~i=1,\dots,m+n,r>0\}\\
\cup\{(e_{i,j}^{(r)})^2,(f_{j,i}^{(r)})^2;&~1\leq i<j\leq m+n, \bar i+\bar j=0\!\!\!\!\! \mod 2, r>0\}.      
\end{aligned}
\end{equation}

The following theorem is a generalization of \cite[Theorem 5.11]{BT18}, see also \cite[Theorem 4.8]{CH23}.
\begin{Theorem}
The center $Z(Y_{m|n})$ is generated by $Z_{\HC}(Y_{m|n})$ and $Z_p(Y_{m|n})$.
Moreover:
\begin{itemize}
 \item[(1)] $Z_p(Y_{m|n})$ is the free polynomial algebra generated by the elements in \eqref{generators of zpym|n};
 \item[(2)] $Z(Y_{m|n})$ is the free polynomial generated by
 \begin{equation}
\begin{aligned}\label{free generators of zym|n}
&\{b_i^{(2r)}, c^{(r)};~2\leq i\leq m+n,r>0\}\\
\cup\{(e_{i,j}^{(r)})^2,(f_{j,i}^{(r)})^2;&~1\leq i<j\leq m+n, \bar i+\bar j=0\!\!\!\!\! \mod 2, r>0\}.      
\end{aligned}
\end{equation}
\end{itemize}
In particular, 
$Z({Y_{m|n}})$ coincides with the image of $Z(Y_{m+n})$ in $Y_{m|n}$.
\end{Theorem}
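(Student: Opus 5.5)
Throughout, the plan is to pass to the associated graded algebra for the loop filtration and compare with Theorem \ref{theorem: center Zsuper g} via the isomorphism $\psi:U_{\super}(\fg)\cong \gr Y_{m|n}$ of Corollary \ref{coro:u super g iso gr ym|n}. Let $Z$ be the subalgebra of $Y_{m|n}$ generated by the elements \eqref{free generators of zym|n}. These elements are central in $Y_{m|n}$ because they are images of central elements of $Y_{m+n}$: the elements $c^{(r)}$, $b_i^{(2r)}$ and the squares of the even root elements lie in $Z(Y_{m+n})$ by \cite[Theorem 5.11]{BT18}. Hence $Z\subseteq Z(Y_{m|n})$. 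Since $\gr Z(Y_{m|n})$ is contained in the centre of $\gr Y_{m|n}$, we also have
\[
\gr Z\subseteq \gr Z(Y_{m|n})\subseteq \psi(Z_{\super}(\fg)).
\]
It therefore suffices to show that the leading terms of the generators \eqref{free generators of zym|n} are, under $\psi$, exactly the free generators \eqref{generators of zsuper g} of $Z_{\super}(\fg)$, up to a triangular change of variables.

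\textbf{Step 1: leading terms.} I will compute the leading terms exactly as in the proof of \cite[Theorem 5.11]{BT18}. These computations take place in $Y_{m+n}$ and then descend along $\gr\pi$, so they apply in $Y_{m|n}$.
\begin{itemize}
\item For the even pairs $(i,j)$ with $i\neq j$, the element $(e_{i,j}^{(r+1)})^2$ lies in $\cF_{2r}$ and has symbol $\psi((e_{i,j}t^r)^2)$; the same holds for $f_{j,i}^{(r+1)}$.
\item The Harish-Chandra element $c^{(r+1)}$ lies in $\cF_r$ with symbol $\psi(z_r)$, since $\gr_r d_i^{(r+1)}=e_{i,i}t^r$.
\item The element $b_i^{(2r+2)}$ has symbol $\psi((e_{i,i}t^r)^2-e_{i,i}t^{2r})$, modulo products of lower symbols already in the algebra. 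This is the content of the computation \cite[Lemma 5.8--Theorem 5.11]{BT18} for $p=2$, which I quote.
\end{itemize}
There is a degree mismatch to handle. $\gr$ for the loop filtration assigns $e_{i,j}t^r$ degree $r$, whereas Theorem \ref{theorem: center Zsuper g} used degree $r+1$. The argument there only uses that leading symbols of the generators \eqref{generators of zsuper g} are algebraically independent generators of $S_{\super}(\fg)^{\fg}$. So I will either rerun Lemma \ref{Lem: invariant subalgebra S super gg} with the loop grading, or introduce an auxiliary Kazhdan-type filtration combining both, as Brundan--Topley do. This Step 1, together with the bookkeeping for $b_i^{(2r)}$, where $\gr$ of a product of $d$'s produces extra lower-order terms that must be absorbed, is the main technical obstacle. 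I expect it to go through verbatim from \cite{BT18}, because the odd generators never enter the formulas for $b_i$ and $c$.

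\textbf{Step 2: conclude.} The symbols of the generators \eqref{free generators of zym|n} are then free generators of $\gr Z_{\super}(\fg)$. Hence these generators are algebraically independent, and $\gr Z=\psi(Z_{\super}(\fg))$. Equality then holds throughout the chain of inclusions above, so $Z=Z(Y_{m|n})$, which proves (2).

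For (1), the generators \eqref{generators of zpym|n} of $Z_p(Y_{m|n})$ have symbols which are algebraically independent in $\gr Y_{m|n}$ by the same computation; these symbols are exactly the images of the generators \eqref{generators of zp super of g}. So $Z_p(Y_{m|n})$ is free on them. The $b_1^{(2r)}$ are expressible in terms of the $c$'s, the other $b_i$'s and the $p$-central squares, as in \cite{BT18}. It follows that $Z(Y_{m|n})$ is generated by $Z_{\HC}(Y_{m|n})$ and $Z_p(Y_{m|n})$. Finally, every generator of $Z(Y_{m|n})$ is the image of an element of $Z(Y_{m+n})$, and images of central elements are central. Therefore $Z(Y_{m|n})$ equals the image of $Z(Y_{m+n})$.
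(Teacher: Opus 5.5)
Your proposal is correct and follows essentially the same route as the paper. You take $Z$ generated by the listed elements, sandwich $\gr Z\subseteq \gr Z(Y_{m|n})\subseteq \psi(Z_{\super}(\fg))$, and use the quoted leading terms of $c^{(r)}$, $b_i^{(2r)}$ and the even squares to identify them with the free generators from Theorem~\ref{theorem: center Zsuper g}. The ``degree mismatch'' you flag is not an issue, because Theorem~\ref{theorem: center Zsuper g} is a filtration-free statement about the algebra $Z_{\super}(\fg)$ whose generators are already homogeneous for the $t$-grading of $U_{\super}(\fg)\cong\gr Y_{m|n}$. So there is no need to rerun Lemma~\ref{Lem: invariant subalgebra S super gg} or add an auxiliary filtration, and the paper simply quotes the exact leading term of $b_i^{(2r)}$ with no further bookkeeping.
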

\begin{proof}
(1) According to \cite[Theorem 5.8]{BT18},   
we have that $b_i^{(2r)}\in\cF_{2r-2}Y_{m|n}$ and 
\begin{align}\label{gr b2r}
 \gr_{2r-2}b^{(2r)}=(e_{i,i}t^{r-1})^2-e_{i,i}t^{2r-2}\in U_{\super}(\fg).   
\end{align}
For $1\leq i<j\leq m+n$ with $\bar i+\bar j=0\!\!\!\mod 2$ and $r>0$,
we have by \eqref{gr def} that $(e_{i,j}^{(r)})^2$, $(f_{j,i}^{(r)})^2\in\cF_{2r-2}Y_{m|n}$ and 
\begin{align}\label{gr eijr2 and fjir2}
\gr_{2r-2}(e_{i,j}^{(r)})^2=(e_{i,j}t^{r-1})^2,\quad \gr_{2r-2}(f_{j,i}^{(r)})^2=(e_{j,i}t^{r-1})^2. 
\end{align}

By definition,
we just need to observe that the elements in \eqref{generators of zpym|n} are algebraically independent.
This follows because Theorem \ref{theorem: center Zsuper g} they are lifts of the algebraically independent generators of the $p$-center $Z_{p,\super}(\fg)$ of the associated graded algebra from \eqref{generators of zp super of g}.

(2) Let $Z$ be the subalgebra of $Z(Y_{m|n})$ generated by the given elements.
Then Corollary \ref{coro:u super g iso gr ym|n} yields
\begin{align}\label{baohan guanxi in zymn}
\gr Z\subseteq \gr Z(Y_{m|n})\subseteq Z(\gr Y_{m|n})=Z(U_{\super}(\fg))=Z_{\super}(\fg).
\end{align}
In view of \cite[Theorem 5.1]{BT18}, we have that $c^{(r)}\in\cF_{r-1}Y_{m|n}$ and 
\begin{align}\label{gr c}
\gr_{r-1}c^{(r)}=z_{r-1}.    
\end{align}
It follows from Theorem \ref{theorem: center Zsuper g}(2) that the generators of $Z$ are lifts of the algebraically independent generators of $Z_{\super}(\fg)$.
Hence, they are algebraically independent and equality holds everywhere in \eqref{baohan guanxi in zymn}. 
As a result, $Z=Z(Y_{m|n})$.
\end{proof}

\begin{Corollary}
The super Yangian $Y_{m|n}$ is free as module over its center, 
with basis given by the ordered monomials in
\begin{align}
\{d_i^{(r)};~2\leq i\leq m+n, r>0\}\}\cup\{e_{i,j}^{(r)},f_{j,i}^{(r)};~1\leq i<j\leq m+n, r>0\}
\end{align}
in which no exponent is $2$ or more.
\end{Corollary}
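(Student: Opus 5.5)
We pass to the associated graded algebra and reduce to a statement about $U_{\super}(\fg)$ over $Z_{\super}(\fg)$.

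\emph{Step 1: freeness of $U_{\super}(\fg)$ over its center.} By Theorem \ref{theorem: center Zsuper g}(2), $Z_{\super}(\fg)$ is the free polynomial algebra on the elements \eqref{generators of zsuper g}. We claim that $U_{\super}(\fg)$ is free over $Z_{\super}(\fg)$ with basis the ordered monomials in
\[
\{e_{i,i}t^r;~2\le i\le m+n, r\ge 0\}\cup\{e_{i,j}t^r;~i\neq j, r\ge 0\}
\]
in which all exponents are at most $1$. To see this, filter and look at $S_{\super}(\fg)$. Write
\[
S_{\super}(\fg)=\kk[z_r][e_{i,j}t^r;~(i,j,r)\in B_0]\otimes\Lambda[e_{i,j}t^r;~(i,j,r)\in B_1],
\]
using that $e_{1,1}t^r$ can be replaced by $z_r$. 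The polynomial part is free over $\kk[z_r][(e_{i,j}t^r)^2;~(i,j,r)\in B_0]$ with basis the monomials having exponents in $\{0,1\}$. The exterior part adds the odd supermonomials, and again all exponents are at most $1$. Hence $S_{\super}(\fg)$ is free over $S_{\super}(\fg)^{\fg}$ (Lemma \ref{Lem: invariant subalgebra S super gg}) with the claimed basis. The usual filtered lifting argument then transfers this to $U_{\super}(\fg)$ over $Z_{\super}(\fg)$.

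\emph{Step 2: lift to $Y_{m|n}$.} We use the isomorphism $\psi$ of Corollary \ref{coro:u super g iso gr ym|n} and the identity $\gr Z(Y_{m|n})=Z_{\super}(\fg)$, obtained as equality in \eqref{baohan guanxi in zymn}. The proposed basis elements have the following leading terms:
\begin{itemize}
\item $d_i^{(r+1)}$ has leading term $e_{i,i}t^r$;
\item $e_{i,j}^{(r+1)}$ and $f_{j,i}^{(r+1)}$ have leading terms $e_{i,j}t^r$ and $e_{j,i}t^r$.
\end{itemize}
An ordered monomial with exponents at most $1$ therefore has as leading term exactly the corresponding basis monomial from Step 1.

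The standard lemma applies: if $A$ is filtered, $Z\subseteq A$ is a filtered subalgebra, and $\gr A$ is free over $\gr Z$ on the symbols of elements $x_\alpha$, then $A$ is free over $Z$ on the $x_\alpha$. For spanning, induct on filtered degree, subtracting leading terms; the filtration is exhaustive and bounded below since degrees are $\ge 0$. For independence, suppose $\sum z_\alpha x_\alpha=0$ is a nontrivial relation. Take the maximal $\deg z_\alpha+\deg x_\alpha$; the top graded component gives a nontrivial relation in $\gr A$, a contradiction.

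\emph{Main obstacle.} The delicate step is Step 1's freeness statement for $S_{\super}(\fg)$ over $S_{\super}(\fg)^\fg$. Two points need care. First, the change of variables $e_{1,1}t^r\leftrightarrow z_r$ is a triangular substitution, so it preserves polynomial generation. Second, in the odd part the exponent bound at most $1$ is automatic, so no odd squares should appear among the central generators. This matches the absence of odd generators in \eqref{free generators of zym|n}, since odd squares are zero in $Y_{m|n}$. The remaining point is compatibility of degrees: the generators of $Z(Y_{m|n})$ have leading terms as in \eqref{gr b2r}, \eqref{gr eijr2 and fjir2} and \eqref{gr c}, which already identify $\gr Z(Y_{m|n})$ with $Z_{\super}(\fg)$.
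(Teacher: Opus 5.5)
Your proposal is correct and is essentially the paper's argument. The paper also passes to $\gr Y_{m|n}\cong U_{\super}(\fg)$ and uses the leading terms \eqref{gr def}, \eqref{gr b2r}, \eqref{gr eijr2 and fjir2}, \eqref{gr c}, which give $\gr Z(Y_{m|n})=Z_{\super}(\fg)$, following Brundan--Topley's Corollary 5.12; your detour through $S_{\super}(\fg)$ simply spells out why $U_{\super}(\fg)$ is free over $Z_{\super}(\fg)$ on the square-free monomials. Note that this first lifting step needs $\gr Z_{\super}(\fg)=S_{\super}(\fg)^{\fg}$, which is the equality established in the proof of Theorem~\ref{theorem: center Zsuper g}, so you should cite it there.
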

\begin{proof}
The proof, which uses \eqref{gr def}, \eqref{gr b2r}, \eqref{gr eijr2 and fjir2} and \eqref{gr c},
is similar to the proof of \cite[Corollary 5.12]{BT18}.
\end{proof}
Similarly, we have the following.
\begin{Corollary}
The super Yangian $Y_{m|n}$ is free as module over $Z_p(Y_{m|n})$, 
with basis given by the ordered monomials in
\begin{align}
\{d_i^{(r)};~1\leq i\leq m+n, r>0\}\}\cup\{e_{i,j}^{(r)},f_{j,i}^{(r)};~1\leq i<j\leq m+n, r>0\}
\end{align}
in which no exponent is $2$ or more.
\end{Corollary}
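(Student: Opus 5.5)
We prove the last Corollary: $Y_{m|n}$ is free over $Z_p(Y_{m|n})$, with basis the ordered supermonomials (no exponent $\geq 2$) in the $d_i^{(r)}$ for $1\leq i\leq m+n$ and the $e_{i,j}^{(r)},f_{j,i}^{(r)}$. The plan is the standard filtered-lifting argument: prove the analogous statement in $\gr Y_{m|n}\cong U_{\super}(\fg)$ (Corollary \ref{coro:u super g iso gr ym|n}), then lift using the loop filtration.

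\textbf{Step 1: the graded statement.} In $U_{\super}(\fg)$ we show that products
\[
\prod (e_{i,j}t^r)^{2a_{i,j;r}} \cdot (\text{ordered monomial with exponents }\leq 1),
\]
with $i,j$ ranging over even pairs (including $i=j$), form a basis. By Proposition \ref{prop:PBW for u superg}, $U_{\super}(\fg)$ has a basis of ordered supermonomials: each odd generator appears with exponent $\leq 1$ and even generators are unrestricted. Write each even exponent as $2a+\epsilon$ with $\epsilon\in\{0,1\}$. Then $S_{\super}(\fg)$ is free over $\kk[(e_{i,j}t^r)^2;\ \bar i+\bar j=0]$, with basis the exponent-$\leq 1$ monomials.

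Next replace $(e_{i,i}t^r)^2$ by $(e_{i,i}t^r)^2-e_{i,i}t^{2r}$. This differs only by an element of lower filtered degree, so an induction on the PBW filtration (placing $e_{i,j}t^r$ in degree $r+1$) shows that $U_{\super}(\fg)$ is free over $Z_{p,\super}(\fg)$ with this basis. Equivalently, one checks the statement in $\gr U_{\super}(\fg)=S_{\super}(\fg)$ and lifts.

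\textbf{Step 2: transport to $\gr Y_{m|n}$.} By \eqref{gr b2r} and \eqref{gr eijr2 and fjir2}, the generators \eqref{generators of zpym|n} of $Z_p(Y_{m|n})$ have top symbols equal, under $\psi$, to the free generators \eqref{generators of zp super of g} of $Z_{p,\super}(\fg)$, with degrees shifted by one. By \eqref{gr def}, the proposed basis monomials have top symbols equal to the exponent-$\leq 1$ ordered monomials in the $e_{i,j}t^{r-1}$.

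Consequently, the products (monomial in the $p$-center generators) $\times$ (proposed basis element) have leading terms forming a basis of $\gr Y_{m|n}$, which is exactly the graded statement of Step 1.

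\textbf{Step 3: lifting.} The final step is the standard filtered argument. For spanning, induct on the loop filtration degree: subtract the lift of the leading-term expansion and recurse into lower degree. For linear independence, take a nontrivial relation and look at its top-degree component, which yields a nontrivial relation in $\gr Y_{m|n}$ and contradicts Step 2. This uses that leading symbols multiply, since $\gr$ is an algebra map on products of filtered elements, and that the degrees add.

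\textbf{Main obstacle.} The main point needing care is Step 1 for the diagonal terms, where the $p$-center generator $(e_{i,i}t^r)^2-e_{i,i}t^{2r}$ is not homogeneous in the PBW grading. I would handle this by working entirely in the loop filtration, where \eqref{gr b2r} already provides the symbol as an element of $U_{\super}(\fg)$, and then performing a second filtration by PBW degree inside $U_{\super}(\fg)$. Alternatively, I would simply invoke the proof of \cite[Corollary 5.12]{BT18} verbatim, with odd exponents restricted to $\leq 1$.
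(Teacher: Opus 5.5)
Your proposal is correct and takes essentially the same route as the paper. The paper's proof is the standard filtered argument of \cite[Corollary 5.12]{BT18}: it uses \eqref{gr def}, \eqref{gr b2r} and \eqref{gr eijr2 and fjir2} to pass to $\gr Y_{m|n}\cong U_{\super}(\fg)$, obtains the freeness of $U_{\super}(\fg)$ over $Z_{p,\super}(\fg)$ by a second reduction to $S_{\super}(\fg)$, and lifts back along the loop filtration, with the diagonal generators $(e_{i,i}t^{r})^2-e_{i,i}t^{2r}$ handled exactly as you describe.
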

\begin{Remark}
We let $Z_p(Y_{m+n})_+$ be the maixmal ideal of $Z_p(Y_{m+n})$ generated by the elements given in \eqref{generators of p-center of yangian}.
The {\it restricted Yangian} is defined via 
\[
Y^{[p]}_{m+n}:=Y_{m+n}/Y_{m+n}Z_p(Y_{m+n})_+.
\]
This definition is due to Goodwin and Topley \cite[Section 4.3]{GT21}.
Similarly, 
we define $Z_p(Y_{m|n})_+$ to be the maximal ideal of $Z_p(Y_{m|n})$ generated by the elements \eqref{generators of zpym|n}.
We can define the {\it restricted super Yangian} 
\[
Y^{[p]}_{m|n}:=Y_{m|n}/Y_{m+n}Z_p(Y_{m|n})_+.   
\]
The definition \eqref{definition of ym|n} implies that $Y^{[p]}_{m+n}\cong Y^{[p]}_{m|n}$.
Recall that 
\[
U^{[p]}(\fg):=U(\fg)/(x^p-x^{[p]};~x\in \fg)
\]
is the {\it restricted enveloping algebra} of $\fg$ (cf. \cite[Section 2.7]{Jan98}),
while the restricted enveloping superalgebra is defined by
\[
U_{\super}^{[p]}(\fg):=U_{\super}(\fg)/(x^p-x^{[p]};~x\in \fg_{\bar 0}).   
\]
We also have $U^{[p]}(\fg)\cong U_{\super}^{[p]}(\fg)$.
This is not surprising because the quadratic map is the restriction of the $[p]$-map,
see also \cite{Bou+23} and \cite[Section 3.10]{EH25}.
\end{Remark}
\bigskip
\noindent
\textbf{Acknowledgment.}
This work is supported by the National Natural
Science Foundation of China (Grant No. 12461005) and
the Natural Science Foundation of Hubei Province (No. 2025AFB716).

\end{document}